\def\NN{\Bbb N}%
\def\mm{{\mathfrak{m}}} 
\def\fkp{{\mathfrak{p}}}
\def\opn#1#2{\def#1{\operatorname{#2}}} 
\opn\chara{char} \opn\length{\ell}  
\opn\embdim{emb\,dim} 
\opn\dim{dim} \opn\codim{codim} \opn\height{height} 
\opn\indeg{indeg} \opn\reg{reg} 
\opn\projdim{proj\,dim} \opn\injdim{inj\,dim} 
\opn\depth{depth}  \opn\grade{grade}
\opn\Ass{Ass} \opn\Min{Min} \opn\Assh{Assh}
\opn\Hom{Hom} \opn\Ker{Ker} \opn\Im{Im} \opn\Coker{Coker} \opn\rank{rank}
\opn\Ann{Ann}
\opn\Spec{Spec} \opn\Proj{Proj}
\opn\Div{div}
\opn\F{F}
\opn\PF{PF}
\opn\t{t}
\opn\g{g}
\opn\e{e}
\opn\SG{SG}
\opn\m{m}
\opn\G{G}
\opn\r{r}
\opn\and{and}
\opn\Card{Card}
\opn\n{n}
\opn\min{min}
\opn\Maximals{Maximals}
\theoremstyle{plain}
\newtheorem{thm}{Theorem}[section] 
\newtheorem{prop}[thm]{Proposition}
\newtheorem{cor}[thm]{Corollary}
\newtheorem{lemma}[thm]{Lemma}
\theoremstyle{definition} 
\newtheorem{defn}[thm]{Definition}
\newtheorem{exam}[thm]{Example} 
\theoremstyle{remark}
\newtheorem{remark}[thm]{Remark}
\newtheorem*{acknowledgement}{Acknowledgment}
\title{Genus of numerical semigroups generated by three elements}
\author{Hirokatsu Nari}
\address{Graduate School of Integrated Basic Sciences, Nihon University, Setagaya-ku, Tokyo, 156-0045, JAPAN}
\email{s6110M09@math.chs.nihon-u.ac.jp}
\author{Takahiro Numata}
\address{Graduate School of Integrated Basic Sciences, Nihon University, Setagaya-ku, Tokyo, 156-0045, JAPAN}
\email{s6110M11@math.chs.nihon-u.ac.jp}
\author{Kei-ichi Watanabe}
\address{Department of Mathematics, College of Humanities and Sciences, 
Nihon University, Setagaya-ku, Tokyo, 156-0045, JAPAN}
\email{watanabe@math.chs.nihon-u.ac.jp}
\date{\today}
\subjclass[2000]{Primary 20M15, Secondary 13F99, 13A02, 16S36}
\keywords{numerical semigroup, pseudo-symmetric semigroup, 
Frobenius number, genus of a semigroup}
\begin{document}

\begin{abstract} Let $H=\left< a,b,c\right>$ be a numerical semigroup generated by three elements and let $R=k[H]$ be its semigroup ring over a field $k$. 
We assume $H$ is not symmetric and assume that the definig ideal of $R$ is 
defined by maximal minors of the matrix $\left( 
\begin{array}{lll}
X^\alpha   &  Y^\beta    &      Z^\gamma  \\
Y^{\beta'} &  Z^{\gamma'}&      X^{\alpha'}
\end{array}
\right)$.  Then we will show that the genus of $H$ is determined by 
the Frobenius number $\F(H)$ and $\alpha\beta\gamma$ or $\alpha '\beta '\gamma '$.  
In particular, we show that $H$ is pseudo-symmetric if and only if $\alpha\beta\gamma=1$ or $\alpha '\beta '\gamma '=1$. \par
Also, we will give a simple algorithm to get all the pseudo-symmetric 
numerical semigroups $H=\left< a,b,c\right>$ with give Frobenius number.
\end{abstract}

\maketitle

\section{Introduction}
 Let $\mathbb{N}$ be the set of nonnegative integers. A \textit{numerical semigroup} 
 $H$ is a subset of $\mathbb{N}$ which is closed under addition
and $\mathbb{N}\setminus H$ is a finite set. We always assume $0\in H$.\par
We define $\F(H):=\max\{n \mid n \not \in H\}, \and \g(H):=\Card(\NN \setminus H)$.
We call $\F(H)$ the {\it Frobenius number} of $H$, and we call $\g(H)$ the {\it genus} of $H$.
Then it is known that $2\g(H)\geq \F(H)+1$.
We denote by $H=\left<a_1,a_2,...,a_n \right>$ the numerical semigroup generated by $a_1,a_2,...,a_n$.
Namely, $H=\sum^{n}_{i=1}a_i \NN$. Moreover, every numerical semigroup admits a unique minimal system of generators.\par
We say that $H$ is {\it symmetric} if $\F(H)$ is odd and for every $a \in \mathbb{Z}$, either
$a\in H$ or $\F(H)-a \in H$, or equivalently, $2\g(H)=\F(H)+1$.
We say that $H$ is {\it pseudo-symmetric} if $\F(H)$ is even and for every $a \in \mathbb{Z}, a\not =\F(H)/2$,
either $a\in H$ or $\F(H)-a \in H$, or equivalently, $2\g(H)=\F(H)+2$.
\par
For a fixed field $k$, a variable $t$ over $k$, let $R=k[H]=k[t^h\;|\; h\in H]$ 
be the semigroup ring of $H$. Then it is known that $H$ semigroup is symmetric (resp. pseudo-symmetric) if and only $R=k[H]$ is a Gorenstein (resp. Kunz) ring 
(see \cite{BDF}).
The $a$-invariant of the semigroup ring $R$  (\cite{GW}) is defined to be 
 $a(R)=\max\{ n \;|\; [H^{1}_{\mm}(R)]_n\ne 0 \}$.
 Since $H^1_{\mm}(R)\cong k[t,t^{-1}]/R$, $a(R)=\max\{m \mid m \not \in H\}$, 
that is, $\F(H)=a(R)$.\par
We say that an integer $x$ is a {\it pseudo-Frobenius number} of $H$ if 
$x \not \in H$ and $x+s \in H$ for all $s \in H, s\ne 0$.
 We denote by $\PF(H)$ the set of pseudo-Frobenius numbers of $H$.
 The cardinality in $\PF(H)$ is called the {\it type} of $H$, denoted by $\t(H)$.
 Since $x\in \PF(H)$ if and only if $t^x$ is in the socle of $H^{1}_{\mm}(k[H])$, 
$\t(H)=\r(k[H])$, the Cohen-Macaulay type of $k[H]$. 
Since $\F(H) \in \PF(H)$, $\t(H)=1$ if and only if $H$ is symmetric.\par
In this paper, we investigate numerical semigroups generated by three elements, 
which is not symmetric.  We put    
 $H= \left<a,b,c \right>$ and always  assume that $H$ is {\it not} symmetric. \par
Let 
We now let $\varphi: S=k[X,Y,Z]\rightarrow R =k[ H ]=k[t^a,t^b,t^c]$ 
the $k$ algebra homomorphism defined by $\varphi (X)=t^a$, $\varphi (Y)=t^b$, 
and $\varphi (Z)=t^c$ and let  $\fkp=\fkp (a,b,c)$ be 
 the kernel of $\varphi$.  Then it is known that if $H$ is not symmetric, 
then the ideal  $\fkp=\Ker(\varphi)$ is 
 generated by the maximal minors of the matrix  
\[\leqno{(1.1)}\qquad
\left( 
\begin{array}{lll}
X^\alpha   &  Y^\beta    &      Z^\gamma  \\
Y^{\beta'} &  Z^{\gamma'}&      X^{\alpha'}
\end{array}
\right)\]
for some positive integers $\alpha,\beta, \gamma, \alpha ', \beta',   \gamma'$ 
(cf. \cite{He}). We want to describe $\g(H)$ by $\alpha,\beta, \gamma, \alpha ', 
\beta',   \gamma'$ and the main goal of this paper is the following Theorem.\\

{\bf Theorem.} Let $H$ be a numerical semigroup as above. Then \par
(1) if $\beta ' b > \alpha a$, then $2\g(H)-(\F(H)+1)=\alpha \beta \gamma$, \par
(2) if $\beta ' b < \alpha a$, then $2\g(H)-(\F(H)+1)=\alpha' \beta' \gamma'$.

As a direct consequence of this Theorem, we can get the characterization of 
pseudo-symmetric semigroups generated by 3 elements.\\

{\bf Corollary.}\quad   Let $H$ be a numerical semigroup as above. Then $H$ is 
pseudo-symmetric if and only if either $\alpha=\beta,=\gamma=1$ or 
$ \alpha '= \beta'=\gamma'=1$.

Also, we will give an  algorithm to classify all pseudo-symmetric 
numerical semigroup $H$ generated by $3$ elements with given Frobenius 
number $\F(H)$. 

\section{Numerical semigroups generated by three elements}

Let $H=\left<a,b,c \right>$ be a numerical semigroup and $R =k[ H ]\cong k[X,Y,Z]/\fkp$ be its semigroup ring over a field $k$.
Then it is known that the ideal $\fkp$ of $S=k[X,Y,Z]$ is generated by the maximal minors of the matrix  
$\left( 
\begin{array}{lll}
X^\alpha   &  Y^\beta    &      Z^\gamma  \\
Y^{\beta'} &  Z^{\gamma'}&      X^{\alpha'}
\end{array}
\right)$,
where $\alpha $, $\beta $, $\gamma $, $\alpha '$, $\beta '$, and $\gamma '$ are positive integers. 
 Since $k[H]/(t^a)\cong k[Y,Z]/(Y^{\beta+\beta'},Y^{\beta'}Z^{\gamma},Z^{\gamma+\gamma'})$,
 the defining ideal of $k[H]/(t^a)$ is generated by the maximal minors of the matrix
 $\left(\begin{array}{lll}
 0 &   Y^\beta    &      Z^\gamma  \\
 Y^{\beta'} &  Z^{\gamma'}& 0
 \end{array}\right)$. 
 Since $a=\dim_kk[H]/(t^a)= \dim_kk[Y,Z]/(Y^{\beta+\beta'},Y^{\beta'}Z^{\gamma},
 Z^{\gamma+\gamma'})$, and likewise for $b,c$, we get the equalitions
 $$\leqno{(2.1.1)}\qquad\begin{array}{lcl}
  a & = & \beta \gamma +\beta '\gamma +\beta '\gamma', \\
b & = & \gamma \alpha +\gamma ' \alpha+\gamma ' \alpha',\\
 c& = & \alpha \beta +\alpha '\beta +\alpha '\beta '.
  \end{array}$$
  
 We put $l=Z^{\gamma +\gamma '}-X^{\alpha'}Y^{\beta}$,
 $m=X^{\alpha +\alpha '}-Y^{\beta '}Z^{\gamma }$, and $n=Y^{\beta +\beta '}-X^{\alpha }Z^{\gamma '}$. There are obvious relations 
 \[X^{\alpha} l+Y^{\beta }m+Z^{\gamma}n=Y^{\beta '}l+Z^{\gamma '}m+X^{\alpha '}n=0.\]
 We put $p=\deg(Z^{\gamma +\gamma '})$, $q=\deg(X^{\alpha +\alpha '})$, $r=\deg(Y^{\beta +\beta '})$, $s=\deg(X^{\alpha })+p$, $t=\deg(Y^{\beta'})+p$.
 Since pd$_S(R)=2$, we get a free resolution of $R$
 \[0\rightarrow S(-s)\oplus S(-t)\rightarrow S(-p)\oplus S(-q)\oplus S(-r)\rightarrow S\rightarrow R\rightarrow 0.\]
 Taking $\Hom_S(*,K_S)=\Hom_S(*,S(-x))$, we get
 \[0\rightarrow S(-x)\rightarrow S(p-x)\oplus S(q-x)\oplus S(r-x)\rightarrow S(s-x)\oplus S(t-x)\rightarrow K_R\rightarrow 0,\]
 where $x=a+b+c$ and $K_R=$Ext$^{2}_{S}(R,K_S)$. 
 
 Since $K_R$ is generated by the elements of degree $- \PF(H)$, 
 from this exact sequence, we have that $\PF(H)=\{s-x,t-x\}$.
 We put $f=s-x$ and $f'=t-x$. \par
 
 By the above argument, we obtain the following results.\par
 
\begin{prop}\label{abc} If $H = \left< a,b,c \right>$ is not symmetric, then \par
 \begin{enumerate}
 \item $(\alpha +\alpha ')a=\beta 'b+\gamma c$ and $\alpha +\alpha'=\min\{n \mid an \in \left<b,c \right>\}$,\par
 \item $(\beta +\beta ')b=\alpha a+\gamma 'c$ and $\beta +\beta '=\min\{n \mid bn \in \left<a,c \right>\}$,\par
 \item $(\gamma +\gamma ')c=\alpha 'a+\beta b$ and $\gamma +\gamma'=\min\{n \mid cn \in \left<a,b \right>\}$.
 \end{enumerate}
 \end{prop}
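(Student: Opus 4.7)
The proof breaks naturally into two parts: the equality and the minimality in each of (1), (2), (3). By the cyclic symmetry of the matrix in (1.1), it suffices to carry out the argument for (1); the other two cases are identical.

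For the equality $(\alpha+\alpha')a = \beta' b + \gamma c$, my plan is simply to apply the map $\varphi$ to the binomial $m = X^{\alpha+\alpha'} - Y^{\beta'}Z^{\gamma} \in \fkp$. Since $\varphi(m) = t^{(\alpha+\alpha')a} - t^{\beta' b + \gamma c} = 0$, the exponents must agree. The same trick applied to $n$ and $l$ yields the equalities in (2) and (3).

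For the minimality $\alpha + \alpha' = \min\{n \mid an \in \langle b,c\rangle\}$, let $n_0$ denote this minimum. The inequality $n_0 \le \alpha+\alpha'$ is immediate from the equality just proved. For the reverse inequality, write $n_0 a = ub + vc$ with $u,v \in \NN$, so that the binomial $X^{n_0} - Y^u Z^v$ lies in $\fkp$. Since $\fkp = (l,m,n)$, there exist $f,g,h \in S$ with
\[
X^{n_0} - Y^u Z^v = f\,l + g\,m + h\,n.
\]
The key observation is to extract the pure $X$-monomials from both sides. On the left, the only such term is $X^{n_0}$. On the right, inspecting each of the six products
\[
\pm f\cdot X^{\alpha'}Y^{\beta},\ \ f\cdot Z^{\gamma+\gamma'},\ \ g\cdot X^{\alpha+\alpha'},\ \ \pm g\cdot Y^{\beta'}Z^{\gamma},\ \ h\cdot Y^{\beta+\beta'},\ \ \pm h\cdot X^{\alpha}Z^{\gamma'},
\]
one sees that the only product which can contribute a pure power of $X$ is $g\cdot X^{\alpha+\alpha'}$: every other product carries a positive power of $Y$ or $Z$ that cannot be cancelled monomially. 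Hence the pure $X$-part of $g$ must contain the monomial $X^{n_0-(\alpha+\alpha')}$, forcing $n_0 \ge \alpha+\alpha'$.

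The main obstacle is really this last monomial-bookkeeping step; one must be careful that no cancellation among the six contributions above produces an unintended pure $X$-monomial, but since each of the five non-$gm$ products individually carries a $Y$ or $Z$ factor, no such cancellation occurs. The arguments for $\beta+\beta'$ and $\gamma+\gamma'$ are obtained by cyclically permuting the roles of $X,Y,Z$ and of $(\alpha,\beta,\gamma)$, $(\alpha',\beta',\gamma')$, using $n$ and $l$ in place of $m$ respectively.
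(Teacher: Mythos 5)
Your proof is correct. The equality part is exactly the natural argument (the three maximal minors $l,m,n$ lie in $\fkp=\Ker\varphi$, so applying $\varphi$ forces the exponents to match), which is also what the paper implicitly uses. For the minimality part the paper gives no explicit argument at all: it simply says ``by the above argument'' and leans on the structural discussion of $k[H]/(t^a)$ and on Herzog's theorem, where these minima are in effect built into the construction of the matrix. Your monomial-extraction step supplies a genuine self-contained justification, assuming only the standing hypothesis that $\fkp=(l,m,n)$: since every monomial of $f\,l$, $h\,n$ and of $g\,Y^{\beta'}Z^{\gamma}$ is divisible by $Y$ or $Z$, the pure $X$-component of $f\,l+g\,m+h\,n$ is (pure $X$-part of $g$)$\cdot X^{\alpha+\alpha'}$, and comparing with $X^{n_0}$ gives $n_0\ge\alpha+\alpha'$; the only point worth making explicit is that $u=v=0$ cannot occur (it would force $n_0a=0$), so the left-hand side really does have pure $X$-part equal to $X^{n_0}$. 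A slightly slicker way to package the same step is to apply the ring homomorphism $S\to k[X]$ sending $Y,Z\mapsto 0$, under which $l,n\mapsto 0$ and $m\mapsto X^{\alpha+\alpha'}$, so that $X^{\alpha+\alpha'}$ divides $X^{n_0}$ in $k[X]$; the analogous specializations $X=Z=0$ and $X=Y=0$ handle (2) and (3) directly, without even invoking the cyclic symmetry you describe (which is also fine, since the cyclic shift of variables and of $(\alpha,\beta,\gamma)$, $(\alpha',\beta',\gamma')$ carries the matrix (1.1) to one of the same shape).
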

 \par

 \begin{prop}\label{ff'} If $H = \left< a,b,c \right>$ is not symmetric, then 
$\PF(H)=\{f, f'\}$ where  \par
 \begin{enumerate}
 \item $f=\alpha a+(\gamma +\gamma ')c-(a+b+c)$, \par
 \item $f'=\beta 'b+(\gamma +\gamma ')c-(a+b+c)$.
 \end{enumerate}
 \end{prop}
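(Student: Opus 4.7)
The plan is to read the pseudo-Frobenius numbers directly off the minimal free $S$-resolution already constructed in the preamble. First, the dualized four-term exact sequence
$$0 \to S(-x) \to S(p-x) \oplus S(q-x) \oplus S(r-x) \to S(s-x) \oplus S(t-x) \to K_R \to 0$$
exhibits $K_R$ as a quotient of $S(s-x) \oplus S(t-x)$, so as an $R$-module it has exactly two minimal homogeneous generators, sitting in degrees $x-s$ and $x-t$ respectively.

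Second, I would connect this back to $\PF(H)$ via the graded local duality already noted in the introduction: $K_R$ is the graded Matlis dual of $H^{1}_{\mm}(R)\cong k[t,t^{-1}]/R$, whose socle is spanned by $\{t^{f}:f\in\PF(H)\}$. Thus the degrees of the minimal generators of $K_R$ form precisely the set $\{-f:f\in\PF(H)\}$. Combining this with the previous step gives the crucial equality
$$\PF(H)=\{s-x,\ t-x\}.$$

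Third, it remains only to compute $s-x$ and $t-x$ explicitly. By construction, $p=\deg(Z^{\gamma+\gamma'})=(\gamma+\gamma')c$, $s=\deg(X^{\alpha})+p=\alpha a+(\gamma+\gamma')c$, $t=\deg(Y^{\beta'})+p=\beta'b+(\gamma+\gamma')c$, and $x=a+b+c$. Setting $f:=s-x$ and $f':=t-x$ yields the two stated formulas at once.

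The step requiring the most attention is the first one: one must verify that the displayed resolution is \emph{minimal}, so that the two summands on the right of the dual sequence really yield a minimal generating set of $K_R$ (rather than overcounting or hiding pseudo-Frobenius numbers behind cancellations). This reduces to the observation that every entry of the $2\times 3$ presenting matrix, and of the associated $3\times 2$ syzygy matrix derived from the two Koszul-like relations $X^{\alpha}l+Y^{\beta}m+Z^{\gamma}n=0$ and $Y^{\beta'}l+Z^{\gamma'}m+X^{\alpha'}n=0$, is a monomial of strictly positive degree; this is automatic from the hypothesis that $\alpha,\beta,\gamma,\alpha',\beta',\gamma'$ are positive integers and that $a,b,c$ minimally generate $H$. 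Once minimality is confirmed, the rest of the proof is essentially bookkeeping on the degree shifts.
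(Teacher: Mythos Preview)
Your proposal is correct and follows essentially the same route as the paper: the paper's ``proof'' is the discussion immediately preceding the proposition (the sentence ``Since $K_R$ is generated by the elements of degree $-\PF(H)$, from this exact sequence, we have that $\PF(H)=\{s-x,t-x\}$''), and you have simply expanded that argument, adding the useful observation that minimality of the resolution follows from all matrix entries having positive degree.
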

 
\begin{remark}  Formulas related to our results in this section can be found in 
\cite{RG1}, \cite{RG}.

\end{remark}

\section{Main results}

The following is the key lemma  to prove our main theorem.
\begin{lemma}\label{card} Let $H = \left< a,b,c \right>$ be as in the previous section. 
We assume that $\beta' b > \alpha a$, or equivalently, $f' > f$. Then\par
\begin{enumerate}
\item for $p,q,r \in \NN$, $f'-f +pa+qb+rc\not\in H$ if and only if 
$p<\alpha, q<\beta$ and $r<\gamma$.\par   
\item $\Card\{h \in H \mid f'-f+h \not \in H\}=\alpha \beta \gamma$.\par
\item $\Card[[(f-H)\cap\NN]\setminus (f'-H)] = \alpha \beta \gamma$.
\end{enumerate}
\end{lemma}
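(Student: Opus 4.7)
My plan is to establish (1) first and then deduce (2) and (3). For the forward direction of (1), equivalently the contrapositive ``if $p\geq\alpha$, $q\geq\beta$, or $r\geq\gamma$, then $f'-f+pa+qb+rc\in H$'', I would use the three equivalent expressions
\[ f' - f \;=\; \beta'b - \alpha a \;=\; \gamma'c - \beta b \;=\; \alpha'a - \gamma c \]
coming from Proposition~\ref{abc}. If $p\geq\alpha$, then $f'-f+pa+qb+rc=\beta'b+(p-\alpha)a+qb+rc\in H$; the cases $q\geq\beta$ and $r\geq\gamma$ are handled symmetrically using the other two expressions.

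The reverse direction is the main obstacle. My approach is to use the Ap\'ery set $\operatorname{Ap}(H,a)=\{h\in H : h-a\notin H\}$. The isomorphism $R/(t^a)\cong k[Y,Z]/(Y^{\beta+\beta'},Y^{\beta'}Z^\gamma,Z^{\gamma+\gamma'})$ noted in Section~2, combined with the standard monomial basis of the quotient, yields
\[ \operatorname{Ap}(H,a) \;=\; \bigl\{jb+kc : 0\leq j<\beta+\beta',\; 0\leq k<\gamma+\gamma',\; \text{not both } j\geq\beta' \text{ and } k\geq\gamma\bigr\}. \]
Rewriting $M:=f'-f+pa+qb+rc=(q+\beta')b+rc-(\alpha-p)a$ and setting $W:=(q+\beta')b+rc$, for $p<\alpha,\,q<\beta,\,r<\gamma$ the pair $(j,k)=(q+\beta',r)$ lies in $[\beta',\beta+\beta')\times[0,\gamma)$, which is inside the allowed region (since $r<\gamma$ avoids the forbidden rectangle). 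Hence $W\in\operatorname{Ap}(H,a)$, so $W$ is the minimal element of $H$ in its residue class modulo $a$; since $M=W-(\alpha-p)a$ is strictly smaller in the same class and non-negative (as $M\geq f'-f>0$), we conclude $M\notin H$.

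For (2), I would first verify that $(p,q,r)\mapsto pa+qb+rc$ is injective on $[0,\alpha)\times[0,\beta)\times[0,\gamma)$: writing any coincidence in the relation lattice $L=\Z v_1+\Z v_2$ with $v_1=(-(\alpha+\alpha'),\beta',\gamma)$ and $v_2=(\alpha,-(\beta+\beta'),\gamma')$, a short case analysis on the signs of the coefficients $k_1,k_2$ shows that any nonzero combination has some coordinate of absolute value at least $\alpha$, $\beta$, or $\gamma$. Combined with (1), this yields exactly $\alpha\beta\gamma$ distinct elements $h\in H$ with $f'-f+h\notin H$.

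Finally, for (3), note that $f-h\in(f'-H)$ iff $f-h=f'-h'$ for some $h'\in H$ iff $f'-f+h\in H$, so $[(f-H)\cap\NN]\setminus(f'-H)$ is in bijection with $\{h\in H : h\leq f,\; f'-f+h\notin H\}$. The bound $h\leq f$ is automatic from (1): any such $h=pa+qb+rc$ with $p<\alpha,\,q<\beta,\,r<\gamma$ satisfies $h\leq(\alpha-1)a+(\beta-1)b+(\gamma-1)c=2f-f'<f$ (using $f'>f$, where the identity $(\alpha-1)a+(\beta-1)b+(\gamma-1)c=2f-f'$ is a short computation from the relations of Proposition~\ref{abc}). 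Hence (3) coincides with (2) and equals $\alpha\beta\gamma$.
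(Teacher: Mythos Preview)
Your argument is correct, and parts of it take a genuinely different route from the paper's.

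For the forward direction of (1) you do exactly what the paper does (the paper phrases the three identities as $f'-f+\alpha a,\, f'-f+\beta b,\, f'-f+\gamma c\in H$, which is the same content).

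For the reverse direction of (1) the approaches diverge. The paper argues directly: assuming $f'-f+pa+qb+rc=ua+vb+wc$ with $u,v,w\in\NN$, it rewrites this as $(\beta'+q-v)b=(\alpha-p+u)a+(w-r)c$ and then splits into cases according to the sign of $w-r$, each time reaching a contradiction with one of the minimality statements in Proposition~\ref{abc}. Your Ap\'ery-set argument is different and arguably more transparent: identifying $W=(q+\beta')b+rc$ as an element of $\operatorname{Ap}(H,a)$ via the monomial basis of $k[Y,Z]/(Y^{\beta+\beta'},Y^{\beta'}Z^{\gamma},Z^{\gamma+\gamma'})$ immediately forces $M=W-(\alpha-p)a\notin H$, with no case analysis. (Incidentally, your Ap\'ery description already gives the injectivity you need for (2): distinct $(q,r)$ in the box yield distinct Ap\'ery elements $W$, hence distinct residues modulo $a$, and distinct $p$ shift within a residue class; so the relation-lattice case check, while correct, is not strictly necessary.)

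For (2) the paper simply says it is ``a direct consequence of (1)'', implicitly using that two representations of the same $h$ cannot lie one inside and one outside the box (else (1) would say $f'-f+h$ is both in and not in $H$); you make the injectivity explicit, which is a welcome clarification. For (3) your argument agrees with the paper's, with the added verification that every such $h$ automatically satisfies $h\le f$ via the identity $(\alpha-1)a+(\beta-1)b+(\gamma-1)c=2f-f'$, which indeed follows from Proposition~\ref{abc}(2).
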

\par

\begin{proof} Since $f'-f+\alpha a=b\gamma, f'-f +\beta b= \gamma' c, f'-f +\gamma c 
= \alpha' a\in H$,  $f'-f +pa+qb+rc\in H$ if $p\ge \alpha$ or $q\ge \beta$ or 
$r\ge \gamma$. Conversely, assume $p<\alpha, q<\beta$ and $r<\gamma$ 
and $f'-f +pa+qb+rc= ua+vb+wc\in H$ for some $u,v,w\in \NN$. Then we have 
$(\beta' +q -v)b = (\alpha - p +u) a + (v-r)c$. If $v \ge r$, then this contradicts 
Proposition \ref{abc} (2).  If $r>v$, we have $(\alpha - p +u) a = 
(\beta' +q -v)b  + (r-v)c$.  Then by   Proposition \ref{abc} (1), we must have 
$p-u \ge \alpha'$ and again we have a contradiction since $r-v< \gamma$.   
 This finishes the proof of (1) and (2) is a direct consequence of (1). \par
To show (3), it suffices to note that for $h\in H$, $f-h \not\in f'-H$  
if and only if $f' -(f-h) \not\in H$. \par
Thus we have 
$\Card[(f-H)\setminus (f'-H)] = \Card\{h \in H \mid f'-f+h \not \in H\}
=\alpha \beta \gamma$.  
\end{proof}

\begin{thm}\label{gap} Let $H = \left< a,b,c \right>$ be a numerical semigroup. Then \par
\begin{enumerate}
\item if $\beta ' b > \alpha a$, then $2\g(H)-(\F(H)+1)=\alpha \beta \gamma$, \par
\item if $\beta ' b < \alpha a$, then $2\g(H)-(\F(H)+1)=\alpha' \beta' \gamma'$.
\end{enumerate}
\end{thm}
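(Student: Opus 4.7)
The plan is to derive the theorem from Lemma~\ref{card}(3) by a standard gap-counting argument using the two pseudo-Frobenius numbers. By the symmetry between $(\alpha,\beta,\gamma)$ and $(\alpha',\beta',\gamma')$ in the defining matrix, statement (2) follows from statement (1) applied after swapping the two rows, so I would focus on (1). Under the assumption $\beta'b > \alpha a$, Proposition~\ref{ff'} gives $f' - f = \beta'b - \alpha a > 0$, so $f' = \F(H)$ is the Frobenius number and $\PF(H)=\{f,f'\}$.

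The key observation I would invoke is the well-known covering
\[
\NN \setminus H \;=\; \bigl[(f-H)\cap\NN\bigr]\cup\bigl[(f'-H)\cap\NN\bigr].
\]
The inclusion $\supseteq$ is immediate: if $x = f''-h$ with $f''\in\PF(H)$, $h\in H$, and $x\ge 0$, then $x\notin H$, for otherwise $f''=x+h\in H$. For the inclusion $\subseteq$, given any gap $x$, the set $\{h\in H : x+h\notin H\}$ is nonempty (contains $0$) and finite, so choosing $h_0$ maximal in it forces $x+h_0\in\PF(H)$, giving $x\in (f-H)\cup(f'-H)$.

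Set $A=(f-H)\cap\NN$ and $B=(f'-H)\cap\NN$. Inclusion-exclusion yields
\[
\g(H) \;=\; |A|+|B|-|A\cap B|.
\]
The map $h\mapsto f'-h$ is a bijection between $\{h\in H : 0\le h\le f'\}$ and $B$; combining with the tautology $|\{h\in H : 0\le h\le f'\}|+|\{x\in\NN\setminus H : 0\le x\le f'\}|=f'+1$ and the fact that every gap is at most $f'=\F(H)$, I get $|B|=\F(H)+1-\g(H)$. Substituting and rearranging,
\[
2\g(H)-(\F(H)+1)\;=\;|A|-|A\cap B|\;=\;\bigl|A\setminus B\bigr|\;=\;\Card\bigl[[(f-H)\cap\NN]\setminus(f'-H)\bigr].
\]

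By Lemma~\ref{card}(3), the right-hand side equals $\alpha\beta\gamma$, which proves (1). I don't expect any real obstacle beyond carefully justifying the covering and the bijection for $|B|$; the arithmetic heart of the argument has already been packaged into Lemma~\ref{card}. Case (2) then follows by interchanging the two rows of the defining matrix, which swaps $f$ and $f'$ and the primed with the unprimed exponents, reducing it to case (1).
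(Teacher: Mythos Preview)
Your proposal is correct and follows essentially the same approach as the paper: the covering $\NN\setminus H=[(f-H)\cap\NN]\cup[(f'-H)\cap\NN]$, the count $|(f'-H)\cap\NN|=\F(H)+1-\g(H)$, and the appeal to Lemma~\ref{card}(3) are exactly what the paper uses, and your inclusion--exclusion rewrite $|A|-|A\cap B|=|A\setminus B|$ is just a rephrasing of the paper's decomposition $|A\cup B|=|B|+|A\setminus B|$. You supply more detail than the paper on the covering and on the bijection computing $|B|$, and your reduction of case~(2) to case~(1) by symmetry matches the paper's ``we may assume $\beta'b>\alpha a$''; just note that the symmetry is not literally a row swap of the matrix in the form (1.1) but requires an accompanying permutation of the variables (equivalently, rerunning the proof of Lemma~\ref{card} with primed and unprimed exponents interchanged).
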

\par

\begin{proof}
We may assume $\beta ' b > \alpha a$. Then 
by Proposition \ref{ff'}, $\F(H)=f'$. Since $\NN \setminus 
H=((f'-H) \cap \NN) \cup ((f-H) \cap \NN)$, we get 
\[g(H) = \Card [(f'-H)\cap \NN] + \Card[[(f-H)\cap \NN]\setminus (f'-H)]\]
hence by Lemma \ref{card}, 
\[g(H) = (F(H) + 1-g(H)) + \alpha \beta \gamma.\]

\end{proof}
\par
As a corollary, we find a characterization of pseudo-symmetric numerical semigroups
 generated by $3$ elements. \par

\begin{cor}\label{ps} $H$ is pseudo symmetric if and only if \par
\begin{enumerate}
\item if $\beta'b>\alpha a$, then $\alpha = \beta = \gamma = 1$ and \par
\item if $\beta'b<\alpha a$, then $\alpha ' = \beta ' = \gamma ' = 1$.
\end{enumerate}
\end{cor}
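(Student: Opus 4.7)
The plan is to deduce the corollary directly from Theorem \ref{gap}. The key observation is that pseudo-symmetry is equivalent to the numerical condition $2\g(H)-(\F(H)+1)=1$: indeed, by definition $H$ is pseudo-symmetric iff $2\g(H)=\F(H)+2$. So the statement is essentially asking: when does the right-hand side of the formula in Theorem \ref{gap} equal $1$?

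The key step is then immediate. Under assumption (1), Theorem \ref{gap} gives $2\g(H)-(\F(H)+1)=\alpha\beta\gamma$, so $H$ is pseudo-symmetric iff $\alpha\beta\gamma=1$, which, since $\alpha,\beta,\gamma$ are positive integers, holds iff $\alpha=\beta=\gamma=1$. The argument for (2) is identical, with primed variables.

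The only minor thing to check is that the trichotomy between $\beta'b>\alpha a$, $\beta'b<\alpha a$, and $\beta'b=\alpha a$ causes no issue. From Proposition \ref{ff'} we see $f'-f=\beta'b-\alpha a$, so the excluded case $\beta'b=\alpha a$ would force $f=f'$, hence $\t(H)=|\PF(H)|=1$, which would mean $H$ is symmetric --- contradicting our standing assumption. Thus exactly one of the two inequalities in Theorem \ref{gap} must hold, and the corollary follows. I do not expect any genuine obstacle here; the work is entirely contained in Lemma \ref{card} and Theorem \ref{gap}, and the corollary is a formal consequence.
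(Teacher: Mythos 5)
Your proposal is correct and follows essentially the same route as the paper: apply Theorem \ref{gap}, note that pseudo-symmetry means $2\g(H)-(\F(H)+1)=1$, and conclude $\alpha\beta\gamma=1$ (resp.\ $\alpha'\beta'\gamma'=1$) forces all three exponents to be $1$. Your extra remark ruling out $\beta'b=\alpha a$ via $f=f'$ and $\t(H)=1$ is a sound (and welcome) observation that the paper leaves implicit under its standing assumption that $H$ is not symmetric.
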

\par
\begin{proof}
We may assume  that $\beta'b>\alpha a$. By Theorem \ref{gap}, $2\g(H)-(\F(H)+1)=\alpha \beta \gamma$. 
Since $H$ is pseudo-symmetric if and only if $2\g(H)=\F(H)+2$, we obtain that $\alpha \beta \gamma=1$, or equivalently, $\alpha =\beta =\gamma =1$.
\end{proof}

\section{The structure of a pseudo-symmetric numerical semigroup generated by three elements}

In this section, we assume that $H=\left<a,b,c\right>$ is a 
pseudo-symmetric numerical semigroup.  
Our purpose is to classify, for any fixed fixed even integer $f$,
all the pseudo-symmetric numerical semigroups $H=\left<a,b,c\right>$
with $\F (H) =f$.  For example, it is shown in Exercise 10.8 of  \cite{RG} 
that there is no pseudo-symmetric numerical semigroup $H=\left<a,b,c\right>$
 with $F(H)=12$.  Actually, we can give many examples of such even integer $f$ for which there does not exist numerical semigroup $H=\left<a,b,c\right>$
 with $F(H)=f$.  (It is shown in \cite{RGG} that every even integer is the Frobenius 
 number of some numerical semigroup generated by at most $4$ elements.)   
\par
As is mentioned before, $\fkp=\fkp (a,b,c)$ of $k[X,Y,Z]$ is generated by the maximal minors of the matrix  as in (1.1) 
and by Corollary \ref{ps}, we can always assume  that  $\alpha '=\beta '=\gamma' =1$.  
Recall that in this case we have by (2.1.1),  
\[(4.1.1) \qquad a=\beta \gamma +\gamma +1, \quad b= 
\gamma \alpha + \alpha+1, \quad c=\alpha \beta +\beta +1.\]

The following is the key for our goal.\par

\begin{thm}\label{str} Let $H=\left<a,b,c \right>$ be a pseudo-symmetric
numerical semigroup and assume that 
$\fkp (a,b,c)$  is generated by the maximal minors of the matrix  
$\left( 
\begin{array}{lll}
X^\alpha   &  Y^\beta    &      Z^\gamma  \\
Y &  Z&      X
\end{array}
\right)$.  Then we have 
\[\alpha \beta \gamma =\dfrac{\F(H)}{2}+1.\]
\end{thm}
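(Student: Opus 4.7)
The plan is to combine Corollary \ref{ps}, Proposition \ref{ff'}, and the formulas (4.1.1) into a direct algebraic computation.

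First I would verify that under the hypothesis $\alpha' = \beta' = \gamma' = 1$ we are automatically in case (2) of Corollary \ref{ps}, i.e.\ $\beta' b < \alpha a$. Indeed, using (4.1.1), one computes
\[
\alpha a - \beta' b = \alpha(\beta\gamma + \gamma + 1) - (\alpha\gamma + \alpha + 1) = \alpha\beta\gamma - 1,
\]
which is strictly positive: the only way it could vanish is $\alpha = \beta = \gamma = 1$, forcing $a = b = c = 3$, contradicting the assumption that $a, b, c$ are the minimal generators of $H$. Therefore by Proposition \ref{ff'}, $\F(H) = f = \alpha a + (\gamma + 1)c - (a + b + c)$.

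Next, I would substitute the expressions for $a, b, c$ from (4.1.1) into this formula for $f$ and simplify. Expanding $\alpha a$ and $(\gamma+1)c$ produces a term $2\alpha\beta\gamma$ together with all three pairwise products $\alpha\beta$, $\beta\gamma$, $\gamma\alpha$ and linear terms in $\alpha, \beta, \gamma$. Subtracting $a + b + c = \alpha\beta + \beta\gamma + \gamma\alpha + \alpha + \beta + \gamma + 3$ kills all these mixed and linear terms, leaving
\[
\F(H) = 2\alpha\beta\gamma - 2.
\]
Dividing by $2$ and adding $1$ yields the desired identity $\alpha\beta\gamma = \F(H)/2 + 1$.

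There is no real obstacle here: once the correct branch of Corollary \ref{ps} is identified, the argument is a routine but satisfying cancellation. Alternatively, one could note that Theorem \ref{gap}(2) already gives $2\g(H) - (\F(H)+1) = \alpha'\beta'\gamma' = 1$, i.e.\ $\g(H) = \F(H)/2 + 1$; then matching this with the closed form $\F(H) = 2\alpha\beta\gamma - 2$ derived above gives the same conclusion, and explains why $\alpha\beta\gamma$ equals the genus of $H$ in this setting.
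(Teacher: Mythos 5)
Your proposal is correct and follows essentially the same route as the paper: use the hypothesis $\alpha'=\beta'=\gamma'=1$ together with Corollary \ref{ps} to see that $f'<f$, so $\F(H)=f=\alpha a+(\gamma+1)c-(a+b+c)$, and then substitute (4.1.1) to get $\F(H)=2\alpha\beta\gamma-2$. Your explicit verification that $\alpha a-\beta' b=\alpha\beta\gamma-1>0$ is a nice way of making precise the paper's terse appeal to Corollary \ref{ps}, but it is the same argument.
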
\par

\begin{proof} From our hypothesis and Corollary \ref{ps},  we have $f'<f$. 
Thus  by Proposition \ref{ff'}, 
$\F(H)=f= \alpha a+(\gamma +1)c-(a+b+c)=2\alpha \beta \gamma -2$.
\end{proof}\par

Now, given a positive even integer $f$,  we can list all possibilities of the set 
$\{\alpha, \beta, \gamma\}$ by prime factorization of $\frac{\F(H)}{2}+1$. 

\begin{remark} \label{perm} Let $\sigma$ be a permutation of $\{\alpha, \beta, \gamma\}$. 
 Then it is easy to see that if $\sigma$ is an even permutation, then the set $\{a,b,c\}$ 
 obtained by $\{\sigma(\alpha), \sigma(\beta), \sigma(\gamma)\}$ 
 as in (4.1.1)  is the same and 
 hence the semigroup $H=\left<a,b,c \right>$ does not change. \par
 But if $\sigma$ is an odd permutation, then the set $\{a,b,c\}$ does change.
 So, from the factorization of $\dfrac{\F(H)}{2}+1$, we get $2$ different semigroups
  in general. 
\end{remark}

\begin{exam}\label{ex1}
For example, let us classify all pseudo-symmetric semigroup $H= \left<a,b,c \right>$ with 
$\F(H)=f=18$. Since  we have $\alpha \beta \gamma = f/2 + 1 = 10$ by Theorem \ref{str}, 
we have $\{\alpha, \beta, \gamma\} = \{10, 1, 1\}$ or $\{5, 2, 1\}$.  
But if we put $\{\alpha, \beta, \gamma\} = \{10, 1, 1\}$ in any order to (4.1.1), 
$a,b,c$ are all multiple of $3$ and we don't get a numerical semigroup. \par
Thus we get $2$ semigroups with $\F(H)=18$; if  $(\alpha, \beta, \gamma) = (5, 2,  1)$
 we get $H=\left<4,11,13 \right>$ and if  $(\alpha, \beta, \gamma) = (5, 1,2)$, then 
 we get $H=\left<5,16,7 \right>$. 
\end{exam}

If $f$ is an even integer not divisible by $12$, then there is a pseudo-symmetric semigroup 
$H= \left<a,b,c \right>$ with $\F(H)=f$ by \cite{RGG}.

\begin{prop}\label{ex2}  \cite{RGG} Let $H= \left<a,b,c \right>$ be a numerical semigroup and $\F(H)=f$. Then
\begin{enumerate}
\item If $f$ is an even integer not divisible by $3$, then 
\[\left<3,\frac{f}{2}+3,f+3 \right>\]
is a pseudo-symmetric numerical semigroup with Frobenius number $f$. 
We put $(\alpha, \beta, \gamma) = (f/2+1, 1,1)$. \par
\item If $f$ is a multiple of $6$ and not a multiple of $12$, then if we put 
$(\alpha, \beta, \gamma) = ((f+2)/4, 2,1)$, we get 
\[H=\left<4,\frac{f}{2}+2,\frac{f}{2}+4 \right>, \]
which is  pseudo-symmetric with $\F(H)=f$.
\end{enumerate}
\end{prop}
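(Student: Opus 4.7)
The plan is to realize each claimed semigroup as an instance of Theorem \ref{str} by exhibiting the defining matrix (1.1) with $\alpha'=\beta'=\gamma'=1$ and the prescribed $(\alpha,\beta,\gamma)$, then reading off $\F(H)=2\alpha\beta\gamma-2$ from that theorem and pseudo-symmetry from Corollary \ref{ps}.

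First I would substitute the given triple into (4.1.1) and check the outputs. In case (1), $(\alpha,\beta,\gamma)=(f/2+1,1,1)$ yields $(a,b,c)=(3,f+3,f/2+3)$, which matches the stated $H=\langle 3,f/2+3,f+3\rangle$; the hypothesis $3\nmid f$ gives $3\nmid f+3$, so $\gcd(a,b,c)=1$. In case (2), writing $f=6m$ with $m$ odd, the triple $(\alpha,\beta,\gamma)=((f+2)/4,2,1)$ produces $(a,b,c)=(4,f/2+2,f/2+4)$; since $f/2=3m$ is odd, $f/2+2$ is odd and $\gcd(4,f/2+2)=1$. A direct expansion from (4.1.1) confirms the three identities $(\alpha+1)a=b+\gamma c$, $(\beta+1)b=\alpha a+c$, and $(\gamma+1)c=a+\beta b$, which express exactly that the three maximal minors of the matrix with top row $(X^\alpha,Y^\beta,Z^\gamma)$ and bottom row $(Y,Z,X)$ lie in $\fkp(a,b,c)$.

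The main obstacle is matching these exponents to the canonical ones of (1.1), which amounts to verifying the minimality parts of Proposition \ref{abc}: that $\alpha+1=\min\{n:an\in\langle b,c\rangle\}$, and similarly $\beta+1$, $\gamma+1$. In case (1) with $a=3$, the observation that $3\nmid f+3$ and $3\nmid f/2+3$ forces any representation $3n=x(f+3)+y(f/2+3)$ with $n\geq 1$ to have both $x\geq 1$ and $y\geq 1$, hence $3n\geq b+c=3f/2+6$ and $n\geq f/2+2$; the statements $\beta+1=\gamma+1=2$ reduce to $b\notin\langle a,c\rangle$ and $c\notin\langle a,b\rangle$, which follow from the same mod-$3$ divisibility. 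Case (2) is handled analogously, now exploiting that $f/2+2$ and $f/2+4$ are both odd, so their small multiples cannot be multiples of $4$. Once the minimality is established, the matrix is in the form of (1.1) with $\alpha'=\beta'=\gamma'=1$, and Corollary \ref{ps} yields pseudo-symmetry while Theorem \ref{str} gives $\F(H)=2\alpha\beta\gamma-2$, which simplifies to $f$ in both cases by direct substitution.
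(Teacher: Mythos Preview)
The paper itself supplies no proof here; the proposition is quoted from \cite{RGG}, and the phrases ``We put $(\alpha,\beta,\gamma)=\ldots$'' are simply an invitation to check the claim via (4.1.1), Corollary~\ref{ps}, and Theorem~\ref{str}---exactly the route you take. Your argument is essentially correct and is the verification the authors intend.

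Two small points deserve tightening. First, the assertion that every representation $3n=x(f+3)+y(f/2+3)$ with $n\geq 1$ must have both $x\geq 1$ and $y\geq 1$ is not literally true: one may have $x=0$, $y\geq 3$ (or $y=0$, $x\geq 3$), since $3\nmid(f+3)$ and $3\nmid(f/2+3)$ only force $3\mid y$ (resp.\ $3\mid x$). Those cases still give $n\geq f/2+3$, so your bound $n\geq f/2+2$ survives; the analogous remark applies in case~(2) with divisibility by~$4$. Second, before you may read off the exponents of (1.1) and invoke Corollary~\ref{ps}, you need to know that $H$ is not symmetric. This is implicit in your minimality checks: the three minimal-relation degrees $(\alpha+1)a$, $(\beta+1)b$, $(\gamma+1)c$ are pairwise distinct in both cases, which by Herzog's criterion rules out the complete-intersection (symmetric) case and guarantees that the matrix presentation (1.1) with your exponents is the correct one.
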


If $f$ is divisible by $12$, there are many cases such that there does not exist 
pseudo-symmetric semigroup $H= \left<a,b,c \right>$ with $\F(H)=f$. 

\begin{prop}\label{12} We suppose $12 \mid f$. If there exists a 
pseudo-symmetric numerical semigroup 
$H=\left<a,b,c \right>$ with  $\F(H)=f$,
then $f/2+1$ has a prime factor of the form $3k+2 \ (k\geq 1)$.
\end{prop}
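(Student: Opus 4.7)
The plan is to argue by contradiction, showing that if every prime divisor of $\F(H)/2 + 1$ were congruent to $1 \pmod{3}$, then $\gcd(a,b,c) \geq 3$, contradicting the fact that $H = \langle a,b,c\rangle$ is a numerical semigroup. First I would invoke Theorem \ref{str}: since $H$ is pseudo-symmetric, after possibly relabeling we may assume $\alpha' = \beta' = \gamma' = 1$, whence $\alpha\beta\gamma = f/2 + 1$. The hypothesis $12 \mid f$ yields $f/2 + 1 \equiv 1 \pmod{6}$, so $\alpha\beta\gamma$ is odd and coprime to $3$. In particular each of $\alpha,\beta,\gamma$ is itself odd and coprime to $3$, and every prime factor of $\alpha\beta\gamma$ is at least $5$.

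Next I would suppose, toward a contradiction, that no prime factor of $f/2+1$ has the form $3k+2$ with $k \geq 1$. Combined with the observation above (which rules out the exceptional primes $2$ and $3$), this forces every prime factor of $\alpha\beta\gamma$ to be $\equiv 1 \pmod{3}$. Since each of $\alpha,\beta,\gamma$ is a product of such primes, we obtain $\alpha \equiv \beta \equiv \gamma \equiv 1 \pmod{3}$.

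The final step is to reduce the formulas (4.1.1), namely $a = \beta\gamma + \gamma + 1$, $b = \gamma\alpha + \alpha + 1$, $c = \alpha\beta + \beta + 1$, modulo $3$. Under the congruences above each of $a,b,c$ becomes $1 + 1 + 1 \equiv 0 \pmod{3}$, so $3 \mid \gcd(a,b,c)$. This contradicts $H$ being a numerical semigroup, so some prime divisor of $f/2+1$ must be $\equiv 2 \pmod{3}$, and by the oddness observation it is at least $5$, i.e., of the form $3k+2$ with $k \geq 1$.

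The proof is genuinely short; there is no substantial obstacle beyond spotting the modular invariant of the expressions in (4.1.1). The only subtle point is making sure the forbidden prime $2$ (which is $3\cdot 0 + 2$) cannot occur as a prime factor of $f/2+1$, and this is precisely what the condition $12 \mid f$ (as opposed to merely $6 \mid f$) guarantees.
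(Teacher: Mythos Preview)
Your proof is correct and follows essentially the same route as the paper: assume no prime factor of $f/2+1$ is $\equiv 2 \pmod 3$, deduce $\alpha\equiv\beta\equiv\gamma\equiv 1 \pmod 3$, and then use (4.1.1) to conclude $3\mid\gcd(a,b,c)$. You are simply more explicit than the paper about why the hypothesis $12\mid f$ is needed (to rule out the primes $2$ and $3$ as factors of $f/2+1$, so that the negation of the conclusion really forces every prime factor to be $\equiv 1 \pmod 3$).
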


\begin{proof}  Otherwise, since $\alpha, \beta, \gamma$ are divisors of 
$f/2+1$, we get  $\alpha\equiv \beta\equiv \gamma\equiv 1$ (mod $3$).
Then by (4.1.1) we see that $a,b,c$ are divisible by $3$ and  
$H=\left<a,b,c \right>$ is not a numerical semigroup.
\end{proof}

\begin{exam}\label{ex3}  Let $f$ be an integer divisible by $12$.
\begin{enumerate}
\item   By Proposition \ref{12}, there is no 
 pseudo-symmetric semigroup $H= \left<a,b,c \right>$ with 
$\F(H)= 12, 24, 36, 60, 72, 84, 96, 120, 132, 
144, 156, 180, 192$.  
\item   On the other hand, there exists
 pseudo-symmetric semigroups $H= \left<a,b,c \right>$ with 
$\F(H)= 48, 168$. Actually,  $H= \left<7,11,31 \right>$ is the  
unique pseudo-symmetric semigroup generated by $3$ elements, 
with $\F(H)=48$ and $H= \left<19,11,103 \right>$ is the  
unique pseudo-symmetric semigroup generated by $3$ elements 
with $\F(H)=168$
\item  The converse of Proposition \ref{12} is not true. Indeed,
If $f=1596$, then $f/2+1=799= 17\times 47$ has a prime factor which is 
congruent to $2$ mod $3$.  But if we substitute $(\alpha, \beta, \gamma)=
(17,47,1)$ (resp. $(47,17,1)$) in (4.1.1), then we get $(a,b,c)=(49,35,847)$
(resp. $(19,95, 817))$.  These are not numerical semigroups since $(a,b,c)$ 
have common prime factor.  It is not difficult to show that $f=1596$ is the smallest 
of such examples. 
\end{enumerate} 
\end{exam}

\section{Simple numerical semigroups}

Let $H$ be a numerical semigroup with minimal system of generators $\{a_1, a_2, ..., a_n\}$. 
We assume that $a_1$ is the least positive integer in $H$.
For every $i \in \{1, ..., n\}$, set
\[\delta _i:=\min \{k \in \mathbb{N} \setminus \{0\} \mid ka_i \in \left<\{a_1, ..., a_n\} \setminus \{a_i\} \right>\}.\]
The notion of simple numerical semigroup was defined in Exercise 10.3 of \cite{RG}.\par

\begin{defn} We say that $H$ is {\it simple} if $a_1=(\delta _2-1)+(\delta _3-1)+\cdot \cdot \cdot +(\delta _n-1)+1$. 
\end{defn}

\begin{prop}\label{sim} Let $H=\left<a_1, a_2, ..., a_n \right>$ be a simple numerical semigroup. Then the type of $H$ is $n-1$. Hence if $H$ is simple with $n\ge 3$, 
then $H$ is not symmetric.
\end{prop}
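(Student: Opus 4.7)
The plan is to identify $\PF(H)$ by computing the Apéry set $\operatorname{Ap}(H,a_1)=\{w\in H:w-a_1\notin H\}$ explicitly and then extracting its maximal elements under the order $x\le_H y\iff y-x\in H$; recall that $\t(H)=|\PF(H)|$ and $\PF(H)=\{w-a_1:w\in\Maximals_{\le_H}\operatorname{Ap}(H,a_1)\}$. The candidate description is
\[\operatorname{Ap}(H,a_1)=\{0\}\cup\{ka_i:2\le i\le n,\ 1\le k\le \delta_i-1\}.\]
Its cardinality is $1+\sum_{i=2}^{n}(\delta_i-1)=a_1$ by the simplicity hypothesis, which matches $|\operatorname{Ap}(H,a_1)|$; so it suffices to verify that (i) each $ka_i$ with $1\le k\le \delta_i-1$ lies in $\operatorname{Ap}(H,a_1)$, and (ii) these values are pairwise distinct.

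For (i), if $ka_i-a_1=\sum_j c_j a_j\in H$, then comparing the coefficient of $a_i$ forces $c_i<k$, so $(k-c_i)a_i\in\langle\{a_j:j\ne i\}\rangle$ with $0<k-c_i\le \delta_i-1$, contradicting the definition of $\delta_i$. For (ii), an equality $ka_i=k'a_{i'}$ with $i\ne i'$ would place $ka_i\in\langle a_{i'}\rangle\subset\langle\{a_j:j\ne i\}\rangle$ with $k<\delta_i$, again a contradiction. Inside this Apéry set one has $ka_i<_H (k+1)a_i$ whenever $k<\delta_i-1$, so only the ``top'' elements $(\delta_i-1)a_i$ can be maximal. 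Conversely, if $w=(\delta_i-1)a_i+h\in\operatorname{Ap}(H,a_1)$ with nonzero $h\in H$, then the expansion of $h$ cannot involve $a_1$ (else $w-a_1\in H$), so write $h=\sum_{j\ge 2}e_j a_j$; since $w\ne 0$, the Apéry description forces $w=k'a_{i'}$ for some $2\le i'\le n$, yielding after rearrangement
\[(k'-e_{i'})a_{i'}=(\delta_i-1+e_i)a_i+\sum_{j\ne 1,i,i'}e_j a_j,\]
and a short case split on the sign of $k'-e_{i'}$ and on whether $i'=i$ violates the minimality of $\delta_{i'}$ (using $\delta_i\ge 2$, which holds because $a_i$ is a minimal generator) in every case.

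Thus the maximal elements of $\operatorname{Ap}(H,a_1)$ are exactly $\{(\delta_i-1)a_i:2\le i\le n\}$, so $\PF(H)=\{(\delta_i-1)a_i-a_1:2\le i\le n\}$ has cardinality $n-1$, proving $\t(H)=n-1$. The second assertion is then immediate: a symmetric semigroup has type $1$, whereas $\t(H)=n-1\ge 2$ when $n\ge 3$. The main obstacle is the last maximality argument — ruling out any strict dominator of $(\delta_i-1)a_i$ inside $\operatorname{Ap}(H,a_1)$ — and it is precisely here that the simplicity hypothesis is indispensable, since without the identity $a_1=1+\sum_{i\ge 2}(\delta_i-1)$ the proposed explicit description of the Apéry set would already fail on cardinality grounds and the case analysis above would collapse.
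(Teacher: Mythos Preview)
Your argument is correct and reaches exactly the same description
\[
\PF(H)=\{(\delta_2-1)a_2-a_1,\ldots,(\delta_n-1)a_n-a_1\}
\]
that the paper states. The paper's proof is a one-line assertion of this formula ``by definition of pseudo-Frobenius number''; you supply the justification it omits by explicitly determining $\operatorname{Ap}(H,a_1)$ and its $\le_H$-maximal elements, which is the standard route to $\PF(H)$ and is precisely where the simplicity hypothesis $a_1=1+\sum_{i\ge2}(\delta_i-1)$ enters.
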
\par

\begin{proof} By definition of pseudo-Frobenius number, we have that 
\[\PF(H)=\{(\delta_2-1)a_2-a_1, (\delta_3-1)a_3-a_1, ..., (\delta_n-1)a_n-a_1\},\]
that is, $H$ has type $n-1$. 
\end{proof}\par

The following is the main result in this section.

\begin{thm}\label{simple-thm} Let $H=\left<a, b, c \right>$ be a numerical semigroup 
defined by the matrix as in (1.1).  If we assume that $a$ is the least positive integer in 
$H$, then $H$ is simple if and only if $\beta '= \gamma=1$.
\end{thm}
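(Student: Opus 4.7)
The plan is to translate simplicity into an explicit numerical identity in the exponents of the Herzog matrix (1.1) and then to recognize the resulting expression as a sum of non-negative integer products, so that its vanishing forces each summand to be zero.

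First, with $a_1 = a$, $a_2 = b$, $a_3 = c$, I identify the invariants $\delta_2$ and $\delta_3$ that appear in the definition of simplicity. By Proposition \ref{abc}, parts (2) and (3), we have $\delta_2 = \beta + \beta'$ and $\delta_3 = \gamma + \gamma'$. Hence $H$ is simple precisely when
\[
a = (\delta_2 - 1) + (\delta_3 - 1) + 1 = \beta + \beta' + \gamma + \gamma' - 1.
\]
Substituting the formula $a = \beta\gamma + \beta'\gamma + \beta'\gamma'$ from (2.1.1) turns this into the equivalent condition
\[
\beta\gamma + \beta'\gamma + \beta'\gamma' - \beta - \beta' - \gamma - \gamma' + 1 = 0.
\]

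The key step, and essentially the only piece of cleverness in the proof, is to spot the factorization
\[
\beta\gamma + \beta'\gamma + \beta'\gamma' - \beta - \beta' - \gamma - \gamma' + 1 = (\beta - 1)(\gamma - 1) + \beta'(\gamma - 1) + \gamma'(\beta' - 1),
\]
which is verified by a direct expansion.

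With this in hand the conclusion is immediate. Since $\beta, \beta', \gamma, \gamma' \geq 1$, each of the three summands on the right is a non-negative integer, so the sum vanishes if and only if each summand vanishes individually. Because $\beta' \geq 1$ and $\gamma' \geq 1$, the second and third summands force $\gamma = 1$ and $\beta' = 1$ respectively, and the first summand is then automatic. Conversely, setting $\beta' = \gamma = 1$ makes all three terms zero, so $H$ is simple. The main obstacle is recognizing the correct three-term factorization above; once it is written down, the rest is a one-line positivity argument.
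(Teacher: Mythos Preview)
Your proof is correct and follows the same strategy as the paper: express simplicity as the vanishing of a sum of non-negative integer terms in $\beta,\beta',\gamma,\gamma'$, then conclude that each term must vanish. The only cosmetic difference is the particular three-term grouping---the paper writes $(\beta-1)(\gamma-1)+(\beta'-1)(\gamma'-1)+(\beta'\gamma-1)$ in place of your $(\beta-1)(\gamma-1)+\beta'(\gamma-1)+\gamma'(\beta'-1)$---but both decompositions force $\beta'=\gamma=1$ by the identical positivity argument.
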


\begin{proof} Since $a=\beta \gamma +\beta '\gamma +\beta '\gamma '$, 
and since we have $\delta_2= \beta+\beta ' , \delta_3=\gamma +\gamma$,
$H$ is simple if and only if 
\[ \beta \gamma +\beta '\gamma +\beta '\gamma ' = 
\beta+\beta '+ \gamma +\gamma' -1\]
or, equivalently,
\[(\beta -1)(\gamma -1)+(\beta'-1)(\gamma'-1)+(\beta'\gamma-1)=0.\]
Since $\beta, \beta',\gamma, \gamma'$ are positive integers, 
the latter equation is equivalent to $\beta'=\gamma=1$.
\end{proof}

\begin{acknowledgement}
The third named author is partially supported by Grant-in-Aid for Scientific 
Research 20540050 and Individual Research Expense of College of Humanity and Sciences, Nihon University.
\end{acknowledgement}


\begin{thebibliography}{HMTW}
\bibitem[BDF]{BDF} V. Barucci, D. E. Dobbs, M. Fontana, Maximality properties in numerical semigroups and applications to one-dimensional
 analytically irreducible local domains, Memoirs of the Amer. Math. Soc. \textbf{598} (1997). 
\bibitem[FGH]{FGH} R. Fr\"oberg, C. Gottlieb, R. H\"aggkvist, On numerical semigroups, Semigroup Forum \textbf{35} (1987), 63-83. 
\bibitem[GW]{GW} S. Goto, K. Watanabe, On graded rings, J. Math. Soc. Japan {\bf30} (1978), 172-213.
\bibitem[He]{He} J. Herzog, Generators and relations of abelian semigroups and semigroup rings, Manuscripta Math. \textbf{3} (1970), 175-193.
\bibitem[RG]{RG} J. C. Rosales, P. A. Garc\'ia-S\'anchez, Numerical semigroups, Springer Developments in Mathematics, Volume \textbf{20}, (2009).
\bibitem[RG1]{RG1} J. C. Rosales, P. A. Garc\'ia-S\'anchez, Numerical semigroups with embedding dimension three,
Archiv der Mathematik {\bf 83} (2004), 488-496.
\bibitem[RG2]{RG2} J. C. Rosales, P. A. Garc\'ia-S\'anchez, Pseudo-symmetric numerical semigroups with three generators, J. Algebra \textbf{291}, (2005), 46-54.
\bibitem[RGG]{RGG} J. C. Rosales, P. A. Garc\'ia-S\'anchez, J. I. Garc\'ia-Garc\'ia, Every positive integer is the Frobenius number of an irreducible numerical semigroup
with at most four generators, Arkiv Math.  {\bf 42}, (2004), 301-306. 
\end{thebibliography}
\end{document}